\documentclass{amsart}
\usepackage{graphicx}
\usepackage{amssymb}
\usepackage[shortlabels]{enumitem}

\newtheorem{thm}{Theorem}[section]

\newtheorem{lem}[thm]{Lemma}

\theoremstyle{definition}

\theoremstyle{remark}
\newtheorem{rem}[thm]{Remark}


\begin{document}

\title[half-space Bernstein theorem]{A half-space Bernstein theorem for anisotropic minimal graphs}
\author{Wenkui Du}
\author{Connor Mooney}
\author{Yang Yang}
\author{Jingze Zhu}
\begin{abstract}
We prove that an anisotropic minimal graph over a half-space with flat boundary must itself be flat. This generalizes a result of Edelen-Wang to the anisotropic case. The proof uses only the maximum principle and ideas from fully nonlinear PDE theory in lieu of a monotonicity formula.
\end{abstract}
\maketitle

\section{Introduction}

In this paper we prove that if $\Sigma$ is an anisotropic minimal graph over a half-space and $\partial \Sigma$ is flat, then $\Sigma$ is flat. More generally, we prove that if $\Sigma$ is an anisotropic minimal graph over a convex domain that is not the whole space and $\Sigma$ has linear boundary data, then $\Sigma$ is flat.

We now state the result more precisely. We assume that $\Sigma \subset \mathbb{R}^{n+1}$ is the graph of a function $u \in C^{\infty}(\Omega) \cap C(\overline{\Omega})$, where $\Omega \subset \mathbb{R}^n$ is a convex domain that is not the whole space. We assume further that $u|_{\partial \Omega}$ agrees with a linear function $L$. Finally, we assume that $\Sigma$ is a critical point of the functional
\begin{equation}\label{PEF}
A_{\Phi}(\Sigma) := \int_{\Sigma}\Phi(\nu)\,d\mathcal{H}^{n}
\end{equation}
where $\mathcal{H}^n$ is $n$-dimensional Hausdorff measure, $\nu$ is the upper unit normal to $\Sigma$, and $\Phi$ is the support function of a smooth, bounded, uniformly convex set $K$ (the Wulff shape). We prove:

\begin{thm}\label{main}
Under the above conditions, $u$ is linear.
\end{thm}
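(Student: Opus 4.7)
The plan has three components: reduce to zero boundary data by a shear of $\mathbb{R}^{n+1}$, set up a comparison with the affine solutions $\pm\varepsilon x_1$, and establish an $o(|x|)$ decay of $u$ that substitutes for the monotonicity formula in Edelen--Wang.

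For the reduction, I would replace the graph of $u$ by the graph of $u - L$ via the shear $(x, t) \mapsto (x, t - L(x))$ of $\mathbb{R}^{n+1}$. Since this shear is linear and invertible, it sends the Wulff shape $K$ to a new smooth, bounded, uniformly convex body $\tilde K$, and the graph of $u - L$ is a critical point of $A_{\tilde \Phi}$ where $\tilde \Phi$ is the support function of $\tilde K$. Hence I may assume $u|_{\partial\Omega} \equiv 0$. Since $\Omega \subsetneq \mathbb{R}^n$ is convex, it admits a supporting hyperplane, and after a further translation and rotation I can assume $0 \in \partial \Omega$ and $\Omega \subset \{x_1 \geq 0\}$. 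The goal is now $u \equiv 0$ on $\Omega$.

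The core argument is a comparison between $u$ and the affine solutions $\pm \varepsilon x_1$ (every hyperplane in $\mathbb{R}^{n+1}$ has vanishing anisotropic mean curvature, so every affine function solves the equation). Both functions vanish on $\{x_1 = 0\} \supset \partial \Omega$, so on $\partial(\Omega \cap B_R) \cap \partial \Omega$ one has $u = 0 \leq \varepsilon x_1$. The comparison principle for the divergence-form quasilinear equation associated with the strictly convex Lagrangian $p \mapsto \Phi(-p, 1)$ would then give $u \leq \varepsilon x_1$ in the interior of $\Omega \cap B_R$, provided one controls $u$ on $\partial B_R \cap \Omega$ by a quantity of size $o(R)$. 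Sending $R \to \infty$ and then $\varepsilon \to 0$ yields $u \leq 0$; the symmetric argument gives $u \geq 0$, whence $u \equiv 0$.

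The main obstacle, and the place where the fully nonlinear PDE ideas enter in lieu of the monotonicity formula, is the $o(|x|)$ growth bound on $u$ --- a Phragm\'en--Lindel\"of-type principle for the anisotropic minimal surface equation on the convex domain $\Omega \subset \{x_1 \geq 0\}$. I would attempt to construct supersolution and subsolution barriers from the Wulff shape $K$ itself --- anisotropic analogues of the catenoid-type barriers used in the isotropic case --- and exploit the uniform convexity and smoothness of $K$ to make the decay quantitative. ABP- or Krylov--Safonov-type estimates, adapted to the divergence-form operator and perhaps applied to the linearized equation satisfied by suitable derivatives of $u$, likely enter in making the barrier argument rigorous on the unbounded domain.
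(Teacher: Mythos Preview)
Your reduction by shear and the comparison-with-$\pm\varepsilon x_1$ scheme rest on the claim that, after subtracting $L$, one has $u = o(|x|)$ and hence $u \equiv 0$. In the half-space case $\Omega = \{x_1 > 0\}$ this is simply false: every function $u = Ax_1$ solves the equation with zero boundary data, so the correct conclusion is ``$u$ is linear,'' not ``$u \equiv 0$.'' The sublinear growth bound you propose to prove therefore cannot hold, and no barrier or ABP argument will produce it. The paper addresses exactly this by working not with the single comparison function $0$ but with the extremal slopes
\[
A_+ = \inf\{A : u \le L + Ax_1\}, \qquad A_- = \sup\{A : u \ge L + Ax_1\},
\]
and proving $A_+ = A_-$; in the half-space case this common value need not be zero.

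The mechanism the paper uses is also rather different from a growth estimate plus comparison. A Hopf-type lemma (Lemma~\ref{Trapping}) shows that the blow-downs $k^{-1}\Sigma$ must contain points accumulating on the extremal hyperplanes $H_\pm$; then the ABP measure estimate (Lemma~\ref{ABP}) upgrades ``close at a point'' to ``close in measure.'' If $A_+ > A_-$ one then extracts a contradiction, case by case, from graphicality (cases (C) and (A) with both slopes finite) or from the calibration/minimizing property of graphs by a cut-and-paste competitor (cases (B) and (A) with an infinite slope). Your proposal does not contain this pairing of upper and lower envelopes, which is the device that survives the half-space case; and even in the slab and non-half-space cases, where $u \equiv 0$ \emph{is} the eventual answer, the sublinear growth you need is essentially the theorem itself and your sketch (``catenoid-type barriers from $K$,'' ``ABP on the linearized equation'') does not indicate how to obtain it on an unbounded domain without some blow-down analysis of the type above.
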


We note that Theorem \ref{main} holds in all dimensions, in contrast with Bernstein-type results for entire anisotropic minimal graphs (linearity is only guaranteed when $n \leq 3$ for general anisotropic functionals (see \cite {Si2}, \cite{MY2}, \cite{M}, \cite{MY}), and only when $n \leq 7$ in the case of the area functional $K = B_1$ (see \cite{S}, \cite{BDG})). The linearity of the boundary data is thus quite powerful for rigidity.

Functionals of the form (\ref{PEF}) are well-studied, both as natural generalizations of the area functional and as models e.g. of crystal formation (\cite{ASS}, \cite{DDG}, \cite{DT}, \cite{FM}, \cite{FMP}). From a technical perspective, what distinguishes general anisotropic functionals from the area case is the absence of a monotonicity formula (\cite{All}), so one cannot reduce regularity and Bernstein-type problems to the classification of cones. This requires the development of more general and sophisticated approaches. In the case of the area functional Theorem \ref{main} was proven by Edelen-Wang in \cite{EW}, and the monotonicity formula played an important role in the proof (particularly in the case that $\Omega$ is a half-space). In contrast, we use only the maximum principle and ideas from fully nonlinear PDE theory, namely, an ABP-type measure estimate (Lemma \ref{ABP}) and an argument reminiscent of the proof of Krylov's boundary Harnack inequality (Lemma \ref{Trapping}), as exposed e.g. in Section 3 of \cite{MNotes}.

\vspace{5mm}

The paper is organized as follows. In Section \ref{Prelim} we recall some useful facts about anisotropic functionals and about the minimizing properties of anisotropic minimal graphs, and we prove an ABP-type measure estimate. In Section \ref{Proof} we prove Theorem \ref{main}.

\section*{Acknowledgements}
W. Du appreciates the support from the NSERC Discovery Grant RGPIN-2019- 06912 of Prof. Y. Liokumovich at the University of Toronto.
C. Mooney was supported by a Sloan Fellowship, a UC Irvine Chancellor's Fellowship, and NSF CAREER Grant DMS-2143668.
Y. Yang gratefully acknowledges the support of the Johns Hopkins University Provost's Postdoctoral Fellowship Program.

\section{Preliminaries}\label{Prelim}

\subsection{Anisotropic Minimal Surfaces}
First we recall a few useful identities related to the integrand $\Phi$. First, we have
\begin{equation}\label{K}
K = \nabla \Phi(\mathbb{S}^{n}), \quad \nu_K(\nabla \Phi(x)) = x
\end{equation}
for $x \in \mathbb{S}^n$. Here $\nu_K$ is the outer unit normal to $K$. The second identity can be seen using the one-homogeneity of $\Phi$, which implies that $x$ is in the kernel of $D^2\Phi(x)$ for all $x \in \mathbb{R}^{n+1} \backslash \{0\}$. Differentiating the second identity we see that
\begin{equation}\label{2FF}
II_K(\nabla \Phi(x)) = (D_T^2\Phi)^{-1}(x), \quad x \in \mathbb{S}^n.
\end{equation}
Here $D_T^2\Phi(x)$ is the Hessian of $\Phi$ on the tangent plane to $\mathbb{S}^n$ at $x$, and here and below, $II_S$ denotes the second fundamental form of a hypersurface $S$. 

Next we recall that if $S$ is a critical point of $A_{\Phi}$ with unit normal $\nu_S$, then the Euler-Lagrange equation reads
\begin{equation}\label{EL}
\text{tr}(D_T^2\Phi(\nu_S(x))II_{S}(x)) = 0.
\end{equation}
The property of being a critical point of $A_{\Phi}$ is dilation and translation invariant. Furthermore, isometries of $S$ and $\nu_S$ by elements of $O(n+1)$ are critical points of anisotropic functionals obtained by performing the same isometries of $K$, and flipping the unit normal of $S$ gives a critical point of the anisotropic functional obtained by replacing $K$ with $-K$.

When $S$ is the graph of a function $w$, and $\nu_S$ is the upper unit normal, the Euler-Lagrange equation (\ref{EL}) can be written
\begin{equation}\label{GraphEL}
\text{tr}(D^2\phi(\nabla w)D^2w) = 0,
\end{equation}
where $\phi(z) = \Phi(-z,\,1)$ for $z \in \mathbb{R}^n$. It follows from (\ref{2FF}) and the uniform convexity of $K$ that the equation (\ref{GraphEL}) is uniformly elliptic when $\nabla w$ is bounded.

\subsection{Minimizing Properties of Graphs}
We will use the following minimizing property of anisotropic minimal graphs. Let $\Omega_S \subset \mathbb{R}^n$ be any domain and let $S$ be a critical point of $A_{\Phi}$ given by the graph of a function $w \in C^{\infty}(\Omega_S) \cap C(\overline{\Omega_S})$, with upper unit normal $\nu_S$. Let $E := \{x_{n+1} \leq w(x),\, x \in \overline{\Omega_S}\}$ be the subgraph of $w$. Finally, let $U \subset \mathbb{R}^{n+1}$ be any bounded open set that doesn't intersect the vertical sides $\{x_{n+1} \leq w(x),\, x \in \partial\Omega_S\}$. Then for any $U' \subset \subset U$, the anisotropic perimeter of $E \backslash U'$ (with respect to the outer unit normal) in $U$ is at least the anisotropic area of $S$ in $U$. This follows quickly from the observation that the vector field $\nabla\Phi(\nu_S)$ in the cylinder over $\Omega_S$, extended to be constant in the $x_{n+1}$ direction, is a calibration. Indeed, it is divergence-free (this follows from the Euler-Lagrange equation (\ref{EL})), and satisfies $\nabla \Phi(\nu_S) \cdot a \leq \Phi(a)$ for all $a \in \mathbb{S}^n$, since $\Phi(a) = \max_{b \in \mathbb{S}^n} \nabla \Phi(b) \cdot a$.

\subsection{Measure Estimate}
Now we prove an ABP-type measure estimate reminiscent of the first step in the proof of the Krylov-Safonov Harnack inequality. The difference is that we do not deal with graphs. The following result is a generalization to the anisotropic case of a lemma proved for minimal surfaces in \cite{Sav}. 

We first set some notation. We let $B_r(x)$ denote a ball in $\mathbb{R}^n$. We define $Q_{r,\,s,\,t}(x) \subset \mathbb{R}^{n+1}$ to be the cylinder $B_r(x) \times (s,\,t)$. For $\lambda \in (0,\,1)$ we let the minimal Pucci operator $\mathcal{M}^-_{\lambda}$ on symmetric $n \times n$ matrices be defined by $\lambda$ times the sum of positive eigenvalues plus $\lambda^{-1}$ times the sum of negative eigenvalues.

The following lemma says that if an anisotropic minimal surface contained on one side of a hyperplane is very close at a point to the hyperplane, then it is very close at most points.

\begin{lem}\label{ABP}
Assume that $S$ is a smooth critical point of $A_{\Phi}$ given by the boundary of a set $E \subset Q_{1,0,1}(0)$. For all $\delta > 0$ small, there exists $\epsilon_0(\delta,\,n,\,K) > 0$ such that if $\epsilon e_{n+1} \in S$ and $\epsilon < \epsilon_0$, then $S$ contains (and lies above) the graph of a function $w$ on a set $G \subset B_{1/3}(0)$ such that 
$$|G| \geq |B_{1/3}| - C_1\delta^{1/2},\, 0 < w < C_1\delta^{3/2}, \text{ and } |\nabla w| < C_1\delta^{1/2}.$$
Here $C_1$ depends only on $n,\,K$.
\end{lem}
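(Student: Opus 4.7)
The plan is to run an ABP-style sliding-paraboloid argument from below $\overline E$, using the uniform ellipticity of the Euler-Lagrange equation (\ref{GraphEL}) to control the Hessian of $S$ at touching points, and then to invoke the calibration/minimizing property recorded in Section \ref{Prelim} to sharpen the measure estimate (since, in the absence of a monotonicity formula, there is no other quantitative tool at hand).

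For each slope $p \in B_\rho(0)\subset\mathbb{R}^n$, with $\rho$ and $\epsilon_0$ of appropriately small size in $\delta$ to be fixed, I would consider the family of concave paraboloids
\[
P_{p,c}(x) \;=\; p\cdot x \;-\; \tfrac{\delta}{2}\bigl(|x|^2 - \tfrac{1}{9}\bigr) \;+\; c,
\]
and slide $c$ upward from $-\infty$ until first contact with $\overline E$ at some point $X(p)=(x(p), x_{n+1}(p))$. The hypothesis $(0,\epsilon)\in\overline E$ forces $c_*(p)\leq \epsilon - \delta/18$, while $\overline E\subset\{x_{n+1}\geq 0\}$ forces $c_*(p)\geq -|p|-\delta/18$. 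Combining these with $x_{n+1}(p)\geq 0$ yields a quadratic inequality in $|x(p)|$ that, once $\rho$ and $\epsilon_0$ are small enough, confines $x(p)$ to $B_{1/3}$ and simultaneously rules out first contact on the top or lateral face of $Q_{1,0,1}$, so that $X(p)\in S$. Near $x(p)$, smoothness of $S$ together with the near-horizontal tangent plane lets me realize $S$ locally as the graph of a function $w$, with $\nabla w(x(p))=p-\delta\,x(p)$ and, by tangency from below, $D^2 w(x(p))\geq -\delta I$. The equation (\ref{GraphEL}), uniformly elliptic with constants depending only on $n,\,K$ since $|\nabla w|$ is bounded, then pins every eigenvalue of $D^2 w(x(p))$ inside $[-\delta,\,C(n,K)\delta]$. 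The pointwise bounds $|\nabla w|<C_1\delta^{1/2}$ and $0<w<C_1\delta^{3/2}$ follow from direct substitution into the formulas for $\nabla w$ and $w=P_{p,c_*(p)}(x(p))$.

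Setting $G=\{x(p):p\in B_\rho\}\subset B_{1/3}$, the area formula applied to the map $p\mapsto x(p)$, whose inverse $x\mapsto\nabla w(x)+\delta x$ has Jacobian of determinant at most $C\delta^n$, yields a first bound of the form $|G|\gtrsim(\rho/\delta)^n$. This naive ABP estimate alone produces only $|G|\geq c_*|B_{1/3}|$ for some fixed $c_*>0$, and cannot by itself see the $\delta^{1/2}$ defect in the statement. To close the gap, I would compare $E$ with the competitor $E\cup(B_{1/3}\times(0,C\delta^{3/2}))$ and invoke the calibration $\nabla\Phi(\nu_S)$ from Section \ref{Prelim}: the resulting anisotropic-area excess controls the anisotropic area of the ``non-graphical'' portion of $S$ over $B_{1/3}\setminus G$, where generic vertical fibers must meet $S$ at least twice, and this upgrades the estimate to $|G|\geq|B_{1/3}|-C_1\delta^{1/2}$. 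The main obstacle is matching these three quantitative rates simultaneously: the localization in $B_{1/3}$ and the height rate $\delta^{3/2}$ force $\rho$ and $\epsilon_0$ to be very small, which weakens the raw ABP measure estimate to the point that the calibration/minimizing step is essential (not merely convenient) for obtaining the sharp measure defect.
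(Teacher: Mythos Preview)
Your proposal has a genuine gap in the second step. The calibration $\nabla\Phi(\nu_S)$ from Section~\ref{Prelim} is only available when $S$ is a \emph{graph}: the vector field is extended to be constant in the vertical direction, and its divergence-freeness comes from the graphical Euler--Lagrange equation. In Lemma~\ref{ABP} the hypothesis is merely that $S=\partial E$ is a smooth critical point, not a graph and not a minimizer; there is no calibration vector field to invoke and no minimizing property to compare against. Your competitor comparison $E$ versus $E\cup(B_{1/3}\times(0,C\delta^{3/2}))$ is therefore unjustified, and as you yourself observe, the paraboloid ABP step alone cannot produce the $\delta^{1/2}$ measure defect once $\rho$ is forced small enough to match the height rate $\delta^{3/2}$.

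The paper avoids this entirely by two changes. First, a barrier argument (using $\min\{|x|^{-M},\delta^{-M}\}$-type subsolutions slid from below) propagates the closeness: in every $\delta$-ball inside $B_{1/3}$, the surface $S$ comes within $C_0(\delta)\epsilon\le\delta^{3/2}$ of $\{x_{n+1}=0\}$, not just near the origin. Second, and this is the key point, the paper slides copies of the rescaled \emph{Wulff shape} $rK$ with $r=C_2\delta^{1/2}$ rather than paraboloids. At a contact point $x$ with center $y=x-r\nabla\Phi(\nu(x))$ one has $D_xy=I+rD_T^2\Phi(\nu(x))\,II_S(x)$; the second summand is nonnegative by the touching condition and has \emph{exactly zero trace} by the Euler--Lagrange equation~(\ref{EL}). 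Thus $D_xy\ge 0$ with trace equal to $n$, and the AGM inequality gives $\det D_xy\le 1$. This sharp Jacobian bound immediately yields that the contact set has measure at least $|B_{1/3-C_3\delta^{1/2}}|\ge|B_{1/3}|-C_1\delta^{1/2}$, with no appeal to minimality whatsoever. With paraboloids the analogous correction term is only controlled eigenvalue-by-eigenvalue in $[0,C(n,K)\delta]$, which loses a factor $C(n,K)^n$ in the determinant and is exactly why your raw estimate stalls.
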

\begin{proof}
We may assume that the unit normal to $S$ is the inner unit normal to $E$, after possibly replacing $K$ by $-K$. We claim that in each vertical cylinder over a ball of radius $\delta$ contained in $B_{1/3}(0)$ there is some point in $S$ a distance at most $C_0(\delta,\,n,\,K)\epsilon$ from $\{x_{n+1} = 0\}$. Let $\lambda(K)$ be small enough that the eigenvalues of $D^2\phi$ are between $\lambda$ and $\lambda^{-1}$ in $B_1$, where $\phi(z) = \Phi(-z,\,1)$. We can choose $M(n,\,K)$ large so that for 
$$\varphi_0 := \min\{|x|^{-M},\,\delta^{-M}\}-(3/2)^M$$ 
we have $\mathcal{M}_{\lambda}^-(D^2 \varphi_0) > 0$ outside of $B_{\delta}$ and $\varphi_0 > 1$ on $\partial B_{1/3}$. If $\epsilon_0(\delta,\,n,\,K)$ is small and $\epsilon < \epsilon_0$, then $\epsilon|\nabla\varphi_0| < 1$ outside $B_{\delta}$, hence $\epsilon\varphi_0$ is a sub-solution to (\ref{GraphEL}) outside $B_{\delta}$. If the claim in the second sentence of the proof was false in the cylinder over some ball $B_{\delta}(x_0)$ for $C_0(\delta,\,n,\,K)$ sufficiently large, then we can slide the graph of $\epsilon\varphi_0(\cdot - x_0)$ from below until it touches $S$ from one side outside of the cylinder over $B_{\delta}(x_0)$ (see Figure \ref{Fig1}), and at the contact point we violate the equation (\ref{GraphEL}).

\begin{figure}
 \begin{center}
    \includegraphics[scale=0.7, trim={20mm 170mm 0mm 20mm}, clip]{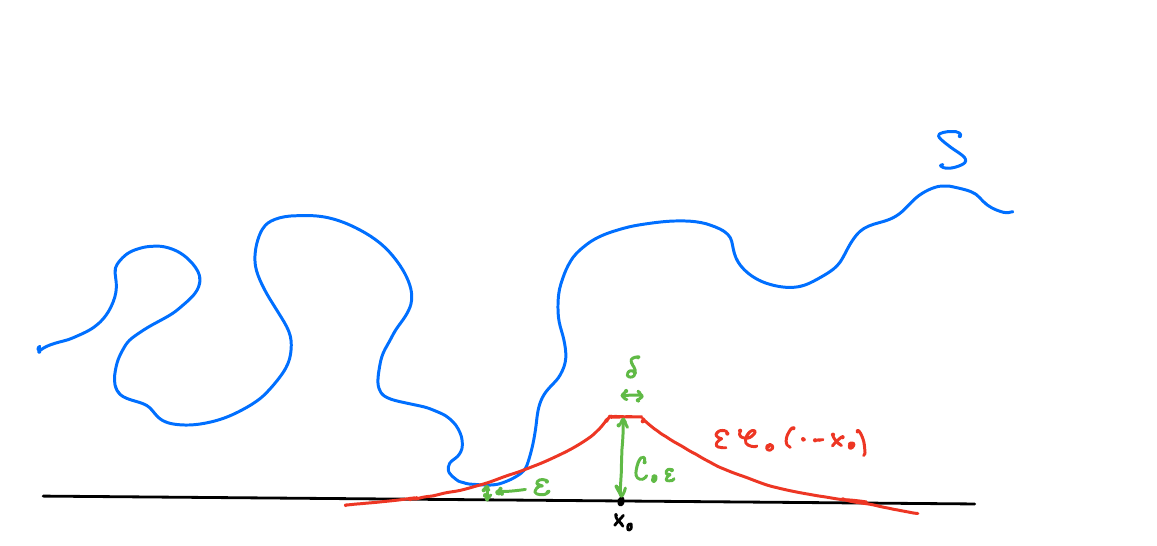}
\caption{$S$ gets $C_0\epsilon$ close to $\{x_{n+1} = 0\}$ at scale $\delta$.}
\label{Fig1}
\end{center}
\end{figure}

Up to taking $\epsilon(\delta)$ smaller we may assume that $C_0\epsilon \leq \delta^{3/2}$. Below $C_i,\, i \geq 1$ will denote large constants depending on $K$. We let $r  = C_2\delta^{1/2}$ and we slide copies of $rK$ centered over points in $B_{1/3-C_3\delta^{1/2}}$ from below until they touch $S$. By the first step, we can take $C_2,\,C_3$ such that the contact happens at points $x$ that are in the cylinder over $B_{1/3}$ and in $\{x_{n+1} < C_1\delta^{3/2}\}$, with upper unit normal $\nu(x)$ lying within $C_1\delta^{1/2}$ of $e_{n+1}$ (see Figure \ref{Fig2}). Here we are using that $K$ is smooth and uniformly convex, hence has interior and exterior tangent spheres of universal radii (depending only on $K$) at all points on its boundary. The corresponding centers $y$ can be found by the relation 
$$y = x - r\nabla \Phi(\nu(x)).$$
Differentiating in $x$ gives
$$D_xy = I + rD_T^2\Phi(\nu(x))II_S(x).$$
Since the second fundamental form of the rescaled Wulff shape at the contact point $x$ is $r^{-1}(D_T^2\Phi)^{-1}(\nu(x))$ (see (\ref{2FF})), we have $II_S(x) \geq -r^{-1}(D_T^2\Phi)^{-1}(\nu(x))$, whence $D_xy \geq 0.$ Since the second term is trace-free we have by the AGM inequality that
$$\det D_xy \leq 1.$$
Thus, the infinitesimal surface measure of centers $y$ is smaller than that of contact points $x$. Since the tangent plane to the surface of contact points at $x$ and the surface of centers at $y$ is the same, the same inequality holds under projection in the $x_{n+1}$ direction. Applying the area formula and recalling that the centers project in the $x_{n+1}$ direction to $B_{1/3-C_3\delta^{1/2}}$ completes the proof.

\begin{figure}
 \begin{center}
    \includegraphics[scale=0.7, trim={20mm 160mm 0mm 30mm}, clip]{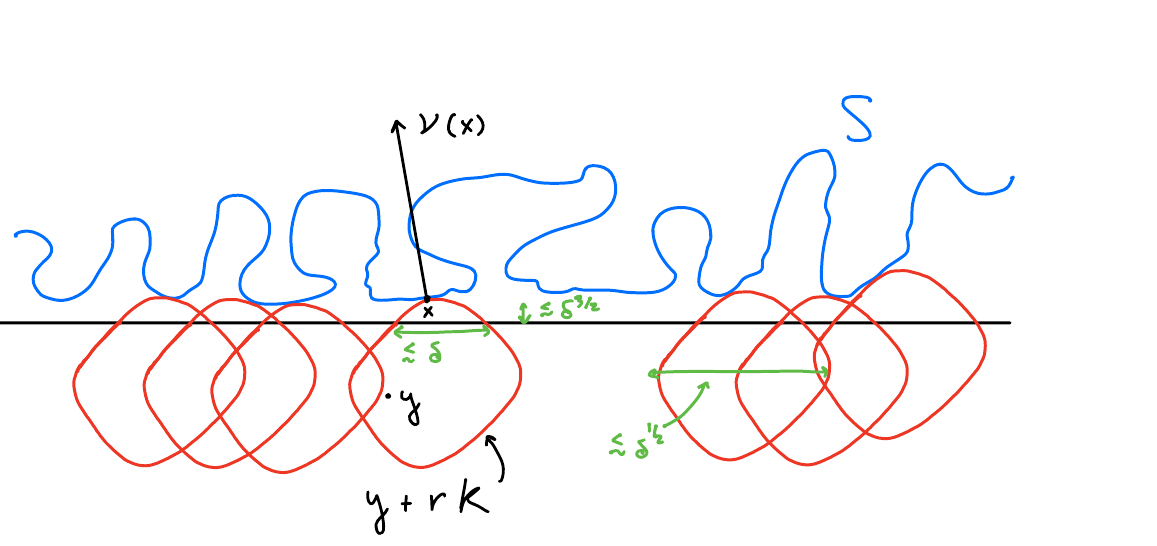}
\caption{The contact points between copies of $rK$ slid from below and $S$ project to nearly the whole ball.}
\label{Fig2}
\end{center}
\end{figure}

\end{proof}

\section{Proof}\label{Proof}
Before proving Theorem \ref{main} we establish some notation. After performing rigid motions, we may assume that $\Omega \subset \{x_1 > 0\}$, that $\{x_1 = 0\}$ is tangent to $\partial \Omega$ at the origin, and that $L(0) = 0$. We let 
$$\Gamma = \text{graph}(L) \cap \{x_1 = 0\}.$$ 
There are three possibilities to consider:
\begin{enumerate}[(A)]
\item $\Omega = \{x_1 > 0\}$ (half-space case)
\item $\Omega = \{0 < x_1 < c < \infty\}$ (slab case)
\item $\overline{\Omega} \cap \{x_1 = 0\} \neq \{x_1 = 0\}$.
\end{enumerate}

We define
\begin{equation}\label{Slopes}
A_+ := \inf\{A : u \leq L + Ax_1 \text{ in } \overline{\Omega}\}, \quad A_- := \sup\{A : u \geq L + Ax_1 \text{ in } \overline{\Omega}\}
\end{equation}
where $A_+ \in \mathbb{R} \cup \{+\infty\}$ and $A_- \in \mathbb{R} \cup \{-\infty\}$. It is clear that $A_- \leq A_+$, and that $A_- \leq 0 \leq A_+$ in cases (B) and (C). To prove Theorem \ref{main} it suffices to prove that $A_+ = A_-$.

We let $H_{\pm}$ denote the graphs of $L + A_{\pm}x_1$ in $\{x_1 \geq 0\}$. When $A_+ = \infty$ we interpret $H_+$ as the closed half-space in $\{x_1 = 0\}$ lying above $\Gamma$, and we understand $H_-$ similarly when $A_- = -\infty$. Finally, we let
$$\Sigma_k := k^{-1}\Sigma \text{ and } u_k := k^{-1}u(k \cdot),$$
so that $\Sigma_k$ are the graphs of $u_k$.

The following is a version of the Hopf lemma, and is reminiscent of a step in the proof of Krylov's boundary Harnack inequality.

\begin{lem}\label{Trapping}
Assume that $A_-$ is anything in case (A) and nonzero in case (B) or (C). Then
$\Sigma_k$ contain points that converge as $k \rightarrow \infty$ to a point in $H_- \backslash \Gamma$. The same statement holds with ``$-$" replaced by ``$+$".
\end{lem}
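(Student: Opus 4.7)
The plan is to use the defining supremum for $A_-$ to produce a sequence of points where $\Sigma$ nearly touches $H_-$ from above, then employ Lemma~\ref{ABP} to propagate this pointwise near-contact to a whole ball, and extract from the blow-downs a sequence of points on $\Sigma_k$ converging to a point of $H_-\setminus\Gamma$. Consider first the case of finite $A_-$, and set $v := u - L - A_-x_1$. Then $v$ is nonnegative on $\overline{\Omega}$, vanishes on $\partial\Omega$, and solves a uniformly elliptic equation of the form (\ref{GraphEL}) (with the argument of $D^2\phi$ shifted by the constant vector $\nabla L + A_-e_1$). If $v\equiv 0$ then $\Sigma = H_-$ and the claim is trivial; otherwise $v>0$ in $\Omega$ by the strong maximum principle, and the definition of $A_-$ yields $p_j\in\Omega$ with $v(p_j)/p_{j,1}\to 0$. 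In case (A), translation invariance of $\Omega=\{x_1>0\}$ in the $x_2,\dots,x_n$ directions lets me pass to a translated solution for which $p_j = p_{j,1}e_1$, and then rescale by $p_{j,1}$ to produce $\hat v_j(y) := v(\tau_j + p_{j,1}y)/p_{j,1}$: a nonnegative solution on $\{y_1>0\}$ vanishing on $\{y_1=0\}$ with $\hat v_j(e_1)\to 0$.

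The heart of the argument is to apply Lemma~\ref{ABP} to the graph of $\hat v_j$ in a cylinder centered horizontally at $e_1$. For large $j$ and small $\delta$, the lemma gives $\hat v_j < C_1\delta^{3/2}$ on a subset of $B_{1/3}(e_1)$ of nearly full measure (with $|\nabla\hat v_j| < C_1\delta^{1/2}$ on the same set); combined with the strong maximum principle and elliptic compactness, this forces $\hat v_j \to 0$ locally uniformly on $\overline{\Omega}$. Hence the translated-rescaled blow-down $\hat\Sigma^{(j)}$ converges to $H_-$, and for any fixed $y^*\in\Omega$ with $y^*_1>0$ the corresponding point on $\hat\Sigma^{(j)}$ converges to $(y^*, L(y^*) + A_-y^*_1)\in H_-\setminus\Gamma$.

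The main obstacle is to pass from the translated blow-down back to the un-translated $\Sigma_k$ of the statement. If $\tau_j/p_{j,1}$ admits a convergent subsequence, the corresponding point on $\Sigma_{p_{j,1}}$ also converges to a point of $H_-\setminus\Gamma$ and we are done; if it does not, I would use the fact that Lemma~\ref{ABP} yields near-vanishing of $\hat v_j$ on a nearly full-measure subset of $B_{1/3}(e_1)$ to reselect the target point $y^*$, or change the rescaling factor (e.g.\ $k_j = |p_j|$), so that the first coordinate of the limit remains strictly positive. The cases $A_- = -\infty$ and cases (B), (C) follow the same scheme: when $A_- = -\infty$, $H_-$ is the vertical half-space in $\{x_1 = 0\}$ below $\Gamma$, and the rescaled violating points converge there; in (B) and (C), the assumption $A_-\neq 0$ guarantees $v\not\equiv 0$ so that the ABP-based rescaling argument applies after a suitable reinterpretation of the blow-down.
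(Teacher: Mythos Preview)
Your approach differs substantially from the paper's, and the ``main obstacle'' you flag is a genuine gap that is not resolved by the suggestions you offer.

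The core problem is this: from the definition of $A_-$ you can indeed find $p_j\in\Omega$ with $v(p_j)/p_{j,1}\to 0$, but you have no control over $p_{j,1}/|p_j|$. If $p_{j,1}/|p_j|\to 0$ (the points drift tangentially to $\partial\Omega$), then under the honest blow-down $\Sigma_{|p_j|}=|p_j|^{-1}\Sigma$ the points $(p_j,u(p_j))/|p_j|$ converge to $\Gamma$, not to $H_-\setminus\Gamma$. Lemma~\ref{ABP} only spreads the near-contact to a ball of radius comparable to $p_{j,1}$, whereas you would need to reach scale $|p_j|$; a Harnack chain across that gap compounds the constant $|\tau_j|/p_{j,1}$ many times, and nothing bounds that ratio. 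Reselecting $y^*$ or changing the rescaling factor does not help, because any fixed target in $\{y_1>0\}$ sits at distance $\gtrsim|\tau_j|/p_{j,1}$ from $e_1$ in your $\hat v_j$-coordinates. A secondary issue is that your appeal to ``strong maximum principle and elliptic compactness'' for $\hat v_j$ presumes uniform ellipticity, which in (\ref{GraphEL}) requires a gradient bound on $u$ that is not assumed (and in case (A) with $A_+=\infty$ certainly fails). In cases (B) and (C) the same tangential-drift problem arises even more acutely, since $p_{j,1}$ is bounded while $|p_j|\to\infty$.

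The paper's proof avoids all of this by arguing by contradiction directly at the level of the $\Sigma_k$. If no subsequence of $\Sigma_k$ contains points converging to $H_-\setminus\Gamma$, then some $\Sigma_{k_j}$ avoid a fixed neighborhood of a fixed point $P\in H_-\setminus\Gamma$. One then slides a Hopf-type sub-barrier (built exactly like $\varphi_0$ in the proof of Lemma~\ref{ABP}, as a graph over $H_-$) beneath $\Sigma_{k_j}$; since $\Sigma_{k_j}$ stays away from $P$, the barrier can be pushed to meet $\Gamma$ at a positive angle, yielding $u_{k_j}\geq L+(A_-+\epsilon)x_1$ on $B_\delta\cap k_j^{-1}\Omega$. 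Because this inequality is invariant under the Lipschitz rescaling $u\mapsto u_k$, it transfers to $u$ on $B_{k_j\delta}\cap\Omega$, and letting $j\to\infty$ contradicts the definition of $A_-$. The key point is that the scaling structure itself does the propagation from local to global, so no translation or Harnack-chaining is needed.
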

\begin{proof}
Assume that $\Sigma_k$ do not contain points that converge to something in $H_- \backslash \Gamma$. Then some subsequence $\{\Sigma_{k_j}\}$ 
avoids a neighborhood of the point in $H_- \backslash \Gamma$ that is unit distance from the origin and orthogonal to $\Gamma$. We can find barriers similar to the one in the proof of Lemma \ref{ABP} that are graphs over $H_-$, bound all $\Sigma_{k_j}$ from below, and meet $\Gamma$ at a positive angle (see Figure \ref{Fig3}) to conclude that
$$u_{k_j} \geq L + \begin{cases}
(A_- + \epsilon)x_1, \quad A_- \in \mathbb{R} \\
-\epsilon^{-1}x_1, \quad A_- = -\infty
\end{cases}$$
in $B_{\delta} \cap k_j^{-1}\Omega$ for some $\epsilon,\,\delta > 0$. Here we used that $A_- < 0$ in cases (B) and (C) to guarantee that the barriers lie below $\Sigma_{k_j}$ on the boundaries of $\Sigma_{k_j}$. From the definition of $u_{k_j}$ and the invariance of the right hand side of the above inequality under Lipschitz rescalings, we see that the same inequality holds for $u$ in $B_{k_j\delta} \cap \Omega$. After taking $j \rightarrow \infty$, we contradict the definition of $A_-$. After reflecting over $\{x_{n+1} = 0\}$, the same argument shows the result with ``$-$" replaced by ``$+$".

\begin{figure}
 \begin{center}
    \includegraphics[scale=0.8, trim={15mm 160mm 0mm 30mm}, clip]{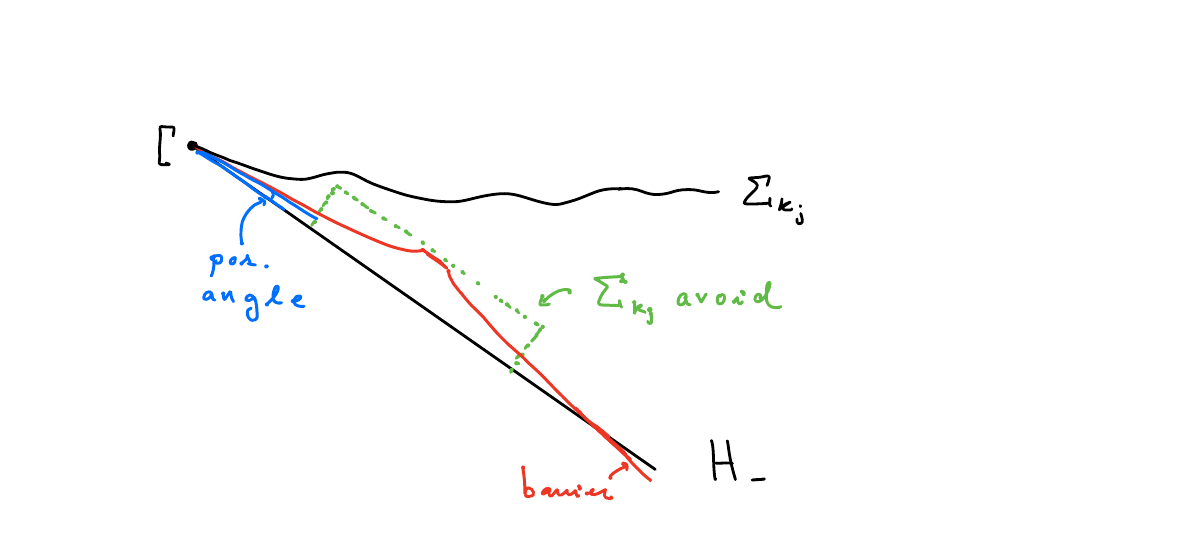}
\caption{Hopf lemma type barriers.}
\label{Fig3}
\end{center}
\end{figure}

\end{proof}

\begin{proof}[{\bf Proof of Theorem \ref{main}}]
We first treat case (C). If $A_+ > 0$, then by Lemma \ref{Trapping} and Lemma \ref{ABP} appropriately rescaled (in fact, just the proof of the first part using barriers) we get that $\Sigma_k$ contains points close to $H_+$ that don't project in the $x_{n+1}$ direction to $\overline{\Omega} \supset k^{-1}\overline{\Omega}$ for some $k$ large, a contradiction of graphicality (see Figure \ref{Fig4}). We conclude that $A_+ = 0$. The assertion that $A_- = 0$ follows from the same argument, after reflection over $\{x_{n+1} = 0\}$.

\begin{figure}
 \begin{center}
    \includegraphics[scale=0.8, trim={30mm 170mm 0mm 20mm}, clip]{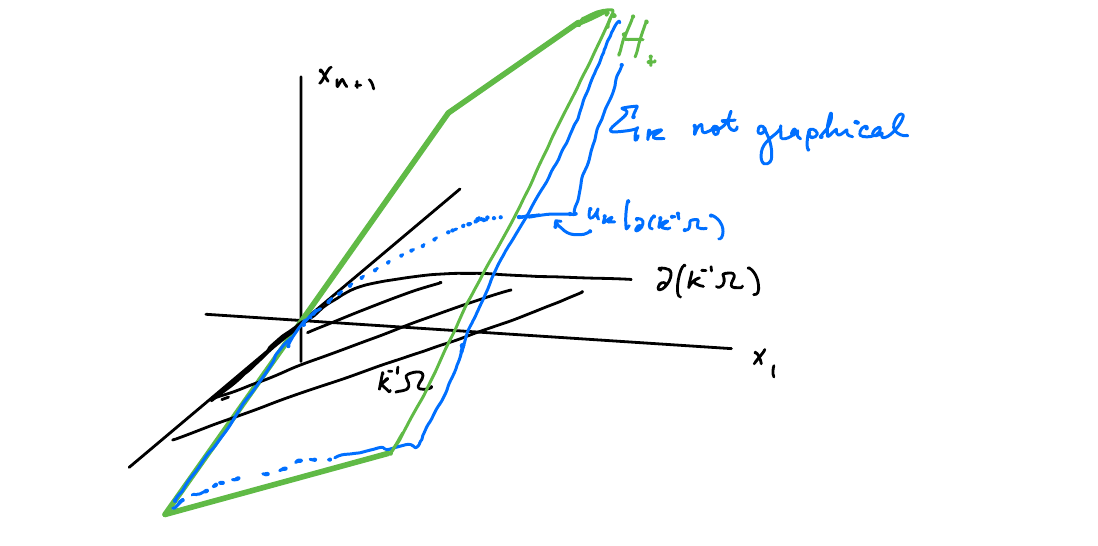}
\caption{$\Sigma_k$ is not graphical for $k$ large.}
\label{Fig4}
\end{center}
\end{figure}

We now turn to case (B). If $0 < A_+ < \infty$, then by Lemma \ref{Trapping}, $\Sigma_k$ contain points converging to a point in $\{x_1 > 0\}$, thus we contradict the graphicality of $\Sigma_k$ over $\{0 < x_1 < k^{-1}c\}$ for $k$ large. Assume now that $A_+ = \infty$. Let $B$ be a ball of radius one in $\{x_1 = 0\}$ that lies above $\Gamma$, and let $Q_h = \{-h < x_1 < h\} \times B$ for $h > 0$ small to be determined. Lemmas \ref{Trapping} and \ref{ABP} (appropriately rescaled) imply that, in $Q_h$, the hypersurfaces $\Sigma_k$ contain a sheet of anisotropic area approaching $|B_1|\Phi(-e_1)$ as $k \rightarrow \infty$ (see Figure \ref{Fig5}). Let $E_k = \{x_{n+1} < u_k(x),\, x \in k^{-1}\Omega\}$ and $F_k = E_k \backslash Q_h$. Then $\partial F_k$ are competitors for $\partial E_k$ in a neighborhood of $Q_h$ which for $k$ large have anisotropic area bounded above by that of $\partial E_k$ minus $|B_1|\Phi(-e_1)/2$ plus $C(n,\,K)h$ (the last term coming from the thin sides of the cylinder $Q_h$). For $h(n,\,K)$ small we contradict the minimizing property of $\partial E_k$. We conclude that $A_+ = 0$. The claim that $A_- = 0$ follows in the same way, after reflecting over $\{x_{n+1} = 0\}$ (and changing the functional accordingly).

\begin{figure}
 \begin{center}
    \includegraphics[scale=0.8, trim={20mm 170mm 0mm 15mm}, clip]{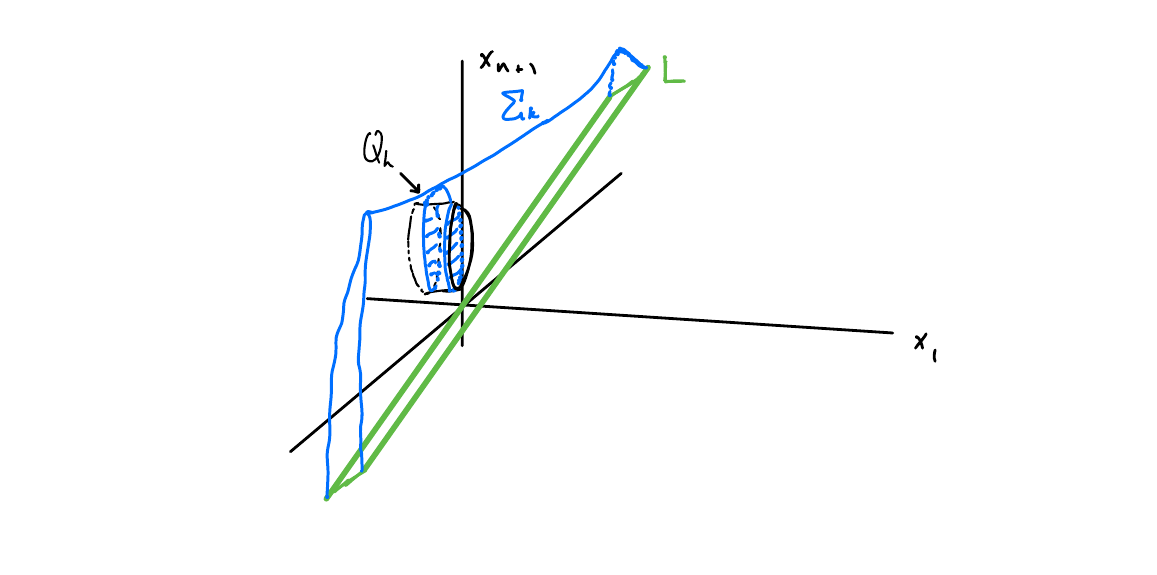}
\caption{$\Sigma_k$ has more anisotropic area in $Q_h$ than the thin side of $Q_h$.}
\label{Fig5}
\end{center}
\end{figure}

Finally we treat case (A). If $A_+$ and $A_-$ are in $\mathbb{R}$ and $A_- < A_+$, then Lemma \ref{Trapping} and Lemma \ref{ABP} imply that $\Sigma_k$ are simultaneously close to $H_{\pm}$ in measure for $k$ large, which contradicts the graphicality of $\Sigma_k$ in the $x_{n+1}$ direction. The problem is thus reduced (after possibly reflecting over $\{x_{n+1} = 0\}$) to ruling out the case that $A_+ = \infty$. We distinguish two sub-cases. The first is that $A_- \in \mathbb{R}$. Using Lemmas \ref{Trapping} and \ref{ABP} near both $H_+$ and $H_-$ we see that in $Q_h$, $\Sigma_k$ contain a sheet of anisotropic area approaching $|B_1|\Phi(-e_1)$, and another portion that projects in the $x_1$ direction to nearly all of $B$. For this one uses that for $k$ large, $\Sigma_k$ are very close in measure to $H_-$ on regions that get close to $\Gamma$ (see Figure \ref{Fig6}). Thus, the anisotropic area of $\Sigma_k$ in $Q_h$ is bounded from below by $|B_1|\Phi(-e_1) + c_0(n,\,K)$ as $k$ gets large. Taking $E_k$ and $F_k$ as in case (B) we again contradict minimality for $h(n,\,K)$ small, since removing $Q_h$ removes $\Sigma_k$ in $Q_h$ but adds at most the anisotropic area of the thin sides and one face of $Q_h$, which is $|B_1|\Phi(-e_1) + C(n,\,K)h$. The alternative is that $A_- = -\infty$. In this case Lemmas \ref{Trapping} and \ref{ABP} imply that $\Sigma_k$ have portions with anisotropic area nearly $|B_1|\Phi(-e_1)$ in $Q_h$ and $|B_1|\Phi(e_1)$ in $-Q_h$ for $k$ large. Using the graphicality of $\Sigma_k$ in the $x_{n+1}$ direction, we see by the pigeonhole principle that in at least one of $Q_h,\, -Q_h$, the hypersurface $\Sigma_k$ contains another portion that projects in the $x_1$ direction to nearly half of $B,\,-B$. We may assume that this happens in $Q_h$, after possibly reflecting over $\{x_{n+1} = 0\}$. Then the anisotropic area of $\Sigma_k$ in $Q_h$ is again bounded from below by $|B_1|\Phi(-e_1) + c_0(n,\,K)$ for $k$ large, and we contradict the minimizing property of $\Sigma_k$ as in the previous sub-case to complete the proof.

\begin{figure}
 \begin{center}
    \includegraphics[scale=0.8, trim={20mm 125mm 0mm 25mm}, clip]{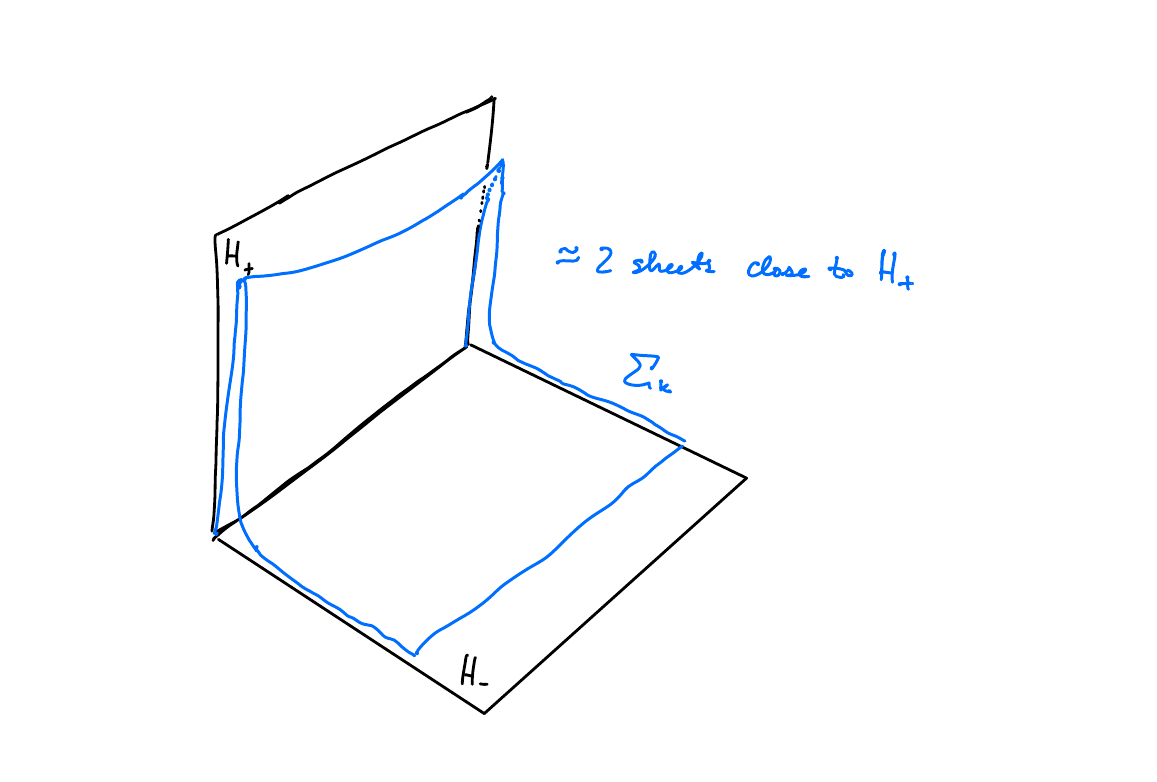}
\caption{$\Sigma_k$ contains nearly two vertical sheets for $k$ large.}
\label{Fig6}
\end{center}
\end{figure}
\end{proof}

\begin{rem}
The argument for case (C) in fact shows the linearity of $\Sigma$ when $\Omega$ is any domain in $\{x_1 > 0\}$ which, outside of a large ball, is contained in a convex cone that is not a half-space.
\end{rem}



\end{document}